\begin{document}

\newtheorem{theorem}{Theorem}[section]
\newtheorem{prop}[theorem]{Proposition}
\newtheorem{lemma}[theorem]{Lemma}
\newtheorem{cor}[theorem]{Corollary}

\theoremstyle{definition}
\newtheorem{definition}[theorem]{Definition}

\newcommand{\C}{{\mathbb C}}
\newcommand{\integers}{{\mathbb Z}}
\newcommand{\natls}{{\mathbb N}}
\newcommand{\ratls}{{\mathbb Q}}
\newcommand{\R}{{\mathbb R}}
\newcommand{\proj}{{\mathbb P}}
\newcommand{\lhp}{{\mathbb L}}
\newcommand{\tube}{{\mathbb T}}
\newcommand{\cusp}{{\mathbb P}}
\newcommand\AAA{{\mathcal A}}
\newcommand\BB{{\mathcal B}}
\newcommand\CC{{\mathcal C}}
\newcommand\DD{{\mathcal D}}
\newcommand\EE{{\mathcal E}}
\newcommand\FF{{\mathcal F}}
\newcommand\GG{{\mathcal G}}
\newcommand\HH{{\mathcal H}}
\newcommand\II{{\mathcal I}}
\newcommand\JJ{{\mathcal J}}
\newcommand\KK{{\mathcal K}}
\newcommand\LL{{\mathcal L}}
\newcommand\MM{{\mathcal M}}
\newcommand\NN{{\mathcal N}}
\newcommand\OO{{\mathcal O}}
\newcommand\PP{{\mathcal P}}
\newcommand\QQ{{\mathcal Q}}
\newcommand\RR{{\mathcal R}}
\newcommand\SSS{{\mathcal S}}
\newcommand\TT{{\mathcal T}}
\newcommand\UU{{\mathcal U}}
\newcommand\VV{{\mathcal V}}
\newcommand\WW{{\mathcal W}}
\newcommand\XX{{\mathcal X}}
\newcommand\YY{{\mathcal Y}}
\newcommand\ZZ{{\mathcal Z}}

\title{A Torelli type theorem for exp-algebraic curves}

\author{Indranil Biswas}
\address{School of Mathematics, Tata Institute of Fundamental
Research, Homi Bhabha Road, Bombay 400005, India}

\email{indranil@math.tifr.res.in}

\author{Kingshook Biswas}
\address{RKM Vivekananda University, Belur Math, WB-711 202, India}

\email{kingshook@rkmvu.ac.in}

\subjclass[2010]{30F30, 34M03}

\begin{abstract} An exp-algebraic curve consists of a compact Riemann surface $S$
together with $n$ equivalence classes of germs of meromorphic functions modulo
germs of holomorphic functions, $\HH = \{ [h_1], \cdots, [h_n] \}$, with poles of
orders $d_1, \cdots, d_n \geq 1$ at points $p_1, \cdots, p_n$. This data determines
a space of functions $\OO_{\HH}$ (respectively, a space of $1$-forms $\Omega^0_{\HH}$)
holomorphic on the punctured surface $S' = S - \{p_1, \cdots, p_n\}$ with exponential
singularities at the points $p_1, \cdots, p_n$ of types $[h_1], \cdots, [h_n]$, i.e.,
near $p_i$ any $f \in \OO_{\HH}$ is of the form $f = ge^{h_i}$ for some germ of meromorphic
function $g$ (respectively, any $\omega \in \Omega^0_{\HH}$ is of the form $\omega = \alpha e^{h_i}$
for some germ of meromorphic $1$-form).

For any $\omega \in \Omega^0_{\HH}$ the completion of $S'$ with respect to
the flat metric $|\omega|$ gives a space $S^* = S' \cup \RR$ obtained by adding a finite
set $\RR$ of $\sum_i d_i$ points, and it is known that integration along curves
produces a nondegenerate pairing
of the relative homology $H_1(S^*, \RR ; \C)$ with the de Rham cohomology group defined by
$H^1_{dR}(S, \HH) := \Omega^0_{\HH}/d\OO_{\HH}$.

There is a degree zero line bundle $L_{\HH}$ associated to an exp-algebraic curve, with a
natural isomorphism between $\Omega^0_{\HH}$ and the space $W_{\HH}$ of meromorphic
$L_{\HH}$-valued $1$-forms which are holomorphic on $S'$, so that $H_1(S^*, \RR ; \C)$ maps
to a subspace $K_{\HH} \subset W^*_{\HH}$.
We show that the exp-algebraic curve $(S, \HH)$ is determined
uniquely by the pair $(L_{\HH},\, K_{\HH} \subset W^*_{\HH})$.
\end{abstract}

\bigskip

\maketitle

\tableofcontents

\section{Introduction}

\medskip

\medskip

A choice of nonconstant meromorphic function $z$ on a compact Riemann surface $S$ realizes $S$ as a finite sheeted branched
covering of the Riemann sphere $\widehat{\C}$. {\it Log-Riemann surfaces of finite type}
are certain branched coverings, in a generalized sense, of $\C$ by a punctured compact Riemann surface,
namely, which are given by certain transcendental functions of
infinite degree. Formally a log-Riemann surface consists of a Riemann surface together with a local holomorphic
diffeomorphism $\pi$ from the surface to $\C$ such that the set of points $\RR$ added to the surface,
in the completion $S^* = S' \cup \RR$ with respect to the path-metric
induced by the flat metric $|d\pi|$, is discrete. Log-Riemann surfaces were
defined and studied in \cite{bpm1} (see also \cite{bpm4}), where it was shown that
the map $\pi$ restricted to any small enough punctured
metric neighbourhood of a point $w^*$ in $\RR$ gives a covering of a punctured disc in $\C$,
and is thus equivalent to
either $(z \longmapsto z^n)$ restricted to a punctured disc $\{ 0 < |z| < \epsilon \}$
(in which case we say $w^*$ is a ramification point of order $n$) or to $(z \mapsto e^z)$
restricted to a half-plane $\{ \Re z < C \}$
(in which case we say $w^*$ is a ramification point of infinite order).

\medskip

A log-Riemann surface is said to be of finite type if it has finitely many ramification points
and finitely generated fundamental group. We will only consider those for which the set of
infinite order ramification points is nonempty (otherwise the map $\pi$ has finite degree
and is given by a meromorphic function on a compact Riemann surface). In \cite{bpm2}, \cite{bpm3},
uniformization theorems were proved for log-Riemann surfaces of finite type, which imply that a
log-Riemann surface of finite type is given by a pair $(S' = S - \{p_1, \cdots, p_n \}, \pi)$, where
$S$ is a compact Riemann surface, and $\pi$ is a meromorphic function on the punctured surface $S'$
such that the differential $d\pi$ has essential
singularities at the punctures of a specific type, namely {\it exponential singularities}.

\medskip

Given a germ of meromorphic function $h$ at a point $p$ of a Riemann surface, a function $f$ with an isolated singularity
at $p$ is said to have an exponential singularity of type $h$ at $p$ if locally $f = g e^h$ for
some germ of meromorphic function $g$ at $p$, while a 1-form $\omega$ is said to have an
exponential singularity of type $h$ at $p$ if locally $\omega = \alpha e^h$ for
some germ of meromorphic 1-form $\alpha$ at $p$. Note that the spaces of germs of functions and 1-forms
with exponential singularity of type $h$ at $p$ only depend on the equivalence class $[h]$ in the space
$\mathcal{M}_p/\OO_p$ of germs of meromorphic functions at $p$ modulo germs of holomorphic functions at $p$.

\medskip

Thus the uniformization theorems of \cite{bpm2}, \cite{bpm3} give us
$n$ germs of meromorphic functions $h_1, \cdots, h_n$ at the punctures $p_1,\cdots,p_n$, with poles of orders
$d_1, \cdots, d_n \geq 1$ say, such that near a puncture $p_j$ the map $\pi$ is of the form $\int g_j e^{h_j} dz$,
where $g_j$ is a germ of meromorphic function near $p_j$ and $z$ a local coordinate near $p_j$. The punctures correspond
to ends of the log-Riemann surface, where at each puncture $p_j$, $d_j$ infinite order ramification points are added
in the metric completion, so that the total number of infinite order ramification points is $\sum_j d_j$. The $d_j$
infinite order ramification points added at a puncture $p_j$ correspond to the $d_j$ directions of approach to the
puncture along which $\Re h_j \to -\infty$ so that $e^{h_j}$ decays exponentially and $\int^z g_j e^{h_j} dz$ converges.
In the case of genus zero with one
puncture for example, which is considered in \cite{bpm2}, $\pi$ must have the form $\int R(z) e^{P(z)} dz$
where $R$ is a rational function and $P$ is a polynomial of degree equal to the number of infinite
order ramification points.

\medskip

In \cite{kb1}, certain spaces of functions and $1$-forms on a log-Riemann surface $S^*$ of finite type
were defined, giving rise to a de Rham cohomology group $H^1_{dR}(S^*)$. The integrals of the $1$-forms considered
along curves in $S^*$ joining the infinite ramification points converge, giving rise to a pairing between $H^1_{dR}(S^*)$
and $H_1(S^*, \RR ; \C)$, which was shown to be nondegenerate (\cite{kb1}).

\medskip

The spaces of functions and $1$-forms defined were observed to depend only on the types
$h_1, \cdots, h_n$ of the exponential singularities, and so a notion less rigid than that of a log-Riemann surface
was defined, namely the notion of an {\it exp-algebraic curve}, which consists of a compact Riemann surface $S$
together with $n$ equivalence classes of germs of meromorphic functions modulo
germs of holomorphic functions, $\HH = \{ [h_1], \cdots, [h_n] \}$, with poles of
orders $d_1, \cdots, d_n \geq 1$ at points $p_1, \cdots, p_n$. The relevant spaces of
 functions and $1$-forms with exponential singularities at $p_1, \cdots, p_n$ of types $[h_1], \cdots, [h_n]$
 can then be defined as follows:
\begin{align*}
\MM_{\HH}\,:= & \{ f \,\mid\, f \hbox{ meromorphic function on } S' \hbox{ with exponential singularities}\\
& \hbox{   of types } [h_1], \cdots, [h_n] \} \\
\OO_{\HH}\,:= & \{ f \in \MM_{\HH} \,\mid\, f \hbox{ holomorphic on } S' \} \\
\Omega_{\HH}\,:= & \{ \omega \,\mid\, \omega \hbox{ meromorphic 1-form on } S' \hbox{ with exponential singularities}\\
& \hbox{   of types } [h_1], \cdots, [h_n] \} \\
\Omega^0_{\HH}\,:= & \{ \omega \in \Omega_{\HH} \,\mid\, \omega \hbox{ holomorphic on } S' \}.
\end{align*}

\medskip

For $f \,\in\, \MM_{\HH}$ (respectively, $\omega \,\in\, \Omega_{\HH}$) we can define a
divisor $(f) = \sum_{p \in S} n_p \cdot p$
(respectively, $(\omega) = \sum_{p \in S} m_p \cdot p$) by $n_p = ord_p(f)$ if $p \in S'$ and
$n_p = ord_{p_i}(g)$ if $p = p_i$, where $g$ is a germ of meromorphic function at $p_i$ such that $f = ge^{h_i}$
(respectively, $m_p = ord_p(\omega)$ if $p \in S'$ and
$n_p = ord_{p_i}(\alpha)$ if $p = p_i$, where $\alpha$ is a germ of meromorphic $1$-form
at $p_i$ such that $\omega = \alpha e^{h_i}$).

\medskip

In \cite{kb1} it is shown how to naturally associate to an exp-algebraic curve $(S,\, {\mathcal{H}})$
a degree zero line bundle $L_{\mathcal{H}}$ together with
a meromorphic connection $\nabla_{\mathcal{H}}$ with poles at $p_1, \cdots, p_n$. The connection
$1$-form of $\nabla_{\HH}$ near $p_i$ is given (with respect to an appropriate local trivialization)
by $dh_i$, so that the pair $(L_{\HH}, \nabla_{\HH})$ determines the exp-algebraic curve $(S,\, \HH)$.
There are naturally defined isomorphisms between the space of meromorphic sections of
$L_{\HH}$ (respectively, the
space of meromorphic $L_{\HH}$-valued $1$-forms) and $\MM_{\HH}$ (respectively, $\Omega_{\HH}$),
such that a meromorphic section $s$ of $L_{\HH}$ (respectively, a meromorphic $L_{\HH}$-valued $1$-form $\alpha$)
maps to an $f \in \MM_{\HH}$ with the same divisor as $s$ (respectively, an $\omega \in \Omega_{\HH}$
with the same divisor as $\alpha$).

\medskip

In particular the space $W_{\HH}$ of meromorphic $L_{\HH}$-valued $1$-forms which
are holomorphic on $S'$ is naturally isomorphic to the space $\Omega^0_{\HH}$. Fixing an $f \in \OO_{\HH}$
inducing a log-Riemann surface structure on $S$, with completion $S^* = S' \cup \RR$, the $1$-forms in
$\Omega^0_{\HH}$ can be integrated along curves in $H_1(S^*, \RR ; \C)$, giving a map
$$H_1(S^*, \RR ; \C) \,\longrightarrow\, (\Omega^0_{\HH})^* \,\simeq\, W^*_{\HH}\, .$$
Let $K_{\HH} \subset W^*_{\HH}$ denote
the image of $H_1(S^*, \RR ; \C)$ in $W^*_{\HH}$. Then our Torelli-type theorem for exp-algebraic
curves states that the pair $(L_{\HH},\, K_{\HH})$ determines the exp-algebraic curve $(S, \,\HH)$:

\medskip

\begin{theorem} \label{mainthm} Let $(S,\, \HH_1), (S,\, \HH_2)$ be two exp-algebraic curves with the
same underlying Riemann surface $S$, and the same set of punctures $p_1, \cdots, p_n$. Suppose that $H_1(S^*_1, \RR ; \C)$
is nontrivial, that the line bundles $L_{\HH_1}, L_{\HH_2}$ are isomorphic and that
the induced isomorphism $W^*_{\HH_1} \,\longrightarrow\, W^*_{\HH_2}$
maps $K_{\HH_1}$ to $K_{\HH_2}$. Then $\HH_1 \,=\, \HH_2$.
\end{theorem}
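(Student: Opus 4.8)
The plan is to reconstruct the polar parts of the connection forms of $\nabla_{\HH}$ — which are exactly the classes $[h_i]$ — from the pair $(L_{\HH},K_{\HH})$, and then invoke the fact recalled above that $(L_{\HH},\nabla_{\HH})$ determines $(S,\HH)$. First I would upgrade the given data to a statement about the connection. Let $V_{\HH}$ be the space of meromorphic sections of $L_{\HH}$ holomorphic on $S'$, so that the isomorphisms of \cite{kb1} identify $(\OO_{\HH},\Omega^0_{\HH},d)$ with $(V_{\HH},W_{\HH},\nabla_{\HH})$; the identification intertwines $d$ with $\nabla_{\HH}$ because the connection form of $\nabla_{\HH}$ near $p_i$ is $dh_i$ and $d(ge^{h_i})=(dg+g\,dh_i)e^{h_i}$. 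Thus $d\OO_{\HH}$ corresponds to $\nabla_{\HH}(V_{\HH})$. Moreover $\nabla_{\HH}$ is injective on $V_{\HH}$: a nonzero constant in $\OO_{\HH}$ would be $ge^{h_i}$ with $g=ce^{-h_i}$, which is not meromorphic since $d_i\ge 1$. Now the pairing of $H_1(S^*,\RR;\C)$ with $H^1_{dR}(S,\HH)=\Omega^0_{\HH}/d\OO_{\HH}$ is nondegenerate and both spaces are finite-dimensional of the same dimension $r$; hence the period map $H_1(S^*,\RR;\C)\to W_{\HH}^*$ is injective with image the full annihilator of $d\OO_{\HH}$. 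Therefore $K_{\HH}$ is exactly the annihilator of $\nabla_{\HH}(V_{\HH})$, and passing to the annihilator in $W_{\HH}$ recovers $\nabla_{\HH}(V_{\HH})$. In other words, the pair $(L_{\HH},K_{\HH})$ determines the subspace $\nabla_{\HH}(V_{\HH})\subseteq W_{\HH}$.

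Next I would set up the comparison. Since $L_{\HH_1}\cong L_{\HH_2}=:L$, the spaces $V:=V_{\HH_1}=V_{\HH_2}$ and $W:=W_{\HH_1}=W_{\HH_2}$ depend only on $L$ and the common punctures, and the hypothesis that the induced map $W^*_{\HH_1}\to W^*_{\HH_2}$ carries $K_{\HH_1}$ to $K_{\HH_2}$ becomes $K_{\HH_1}=K_{\HH_2}$ inside $W^*$. By the previous paragraph this gives $\nabla_1(V)=\nabla_2(V)$, where $\nabla_1,\nabla_2$ denote $\nabla_{\HH_1},\nabla_{\HH_2}$, two meromorphic connections on the single bundle $L$ with poles only at the $p_i$. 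Set $\eta:=\nabla_1-\nabla_2$, a global meromorphic $1$-form on $S$ holomorphic on $S'$. Since the polar part of a connection form at $p_i$ is a well-defined invariant (unchanged under holomorphic retrivialization) equal to the polar part of $dh_i$, and the latter is equivalent to $[h_i]$, the theorem reduces to showing that $\eta$ is holomorphic at each $p_i$: this gives $[h_i^{(1)}]=[h_i^{(2)}]$ for all $i$, hence $\HH_1=\HH_2$. Finally, observe that $\nabla_1(V)=\nabla_2(V)$ forces $\eta\cdot V\subseteq\nabla_2(V)$, since for $s\in V$ one has $\eta s=\nabla_1 s-\nabla_2 s$ with both terms in $\nabla_2(V)$; unwinding the identifications, this is the statement $\eta\cdot\OO_{\HH_2}\subseteq d\OO_{\HH_2}$, i.e. every period $\int_{\gamma}\eta f$ vanishes for $f\in\OO_{\HH_2}$ and $\gamma\in H_1(S^*_2,\RR;\C)$.

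The analytic core is to derive a contradiction from this when $\eta$ has a pole. Suppose $\eta$ has a nonzero polar coefficient at some puncture $p_{i_0}$. Working in a local coordinate $z$ and trivialization, $\nabla_2=d+\theta$ with $\theta=dh^{(2)}_{i_0}$ of pole order $d^{(2)}_{i_0}+1$ and nonzero leading coefficient; on leading Laurent coefficients $\nabla_2$ multiplies by that coefficient and raises the pole order by $d^{(2)}_{i_0}+1$. Since $\nabla_2(V)=d\OO_{\HH_2}$ has finite codimension $r$ in $W$, cut out precisely by the period functionals spanning $K_{\HH_2}$, the condition ``$\eta f$ exact for all $f$'' is a finite system of moment conditions $\int_{\gamma}\eta s\,e^{h^{(2)}_{i_0}}=0$. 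Using Riemann--Roch I can produce global sections $s\in V$ with prescribed leading behaviour at $p_{i_0}$ of arbitrarily high pole order; for such $s$ the pole order and leading coefficient of $\eta s$ at $p_{i_0}$ are the sum and product of those of $\eta$ and $s$, and I would track how this is detected by a relative cycle $\gamma$ running between ramification points over $p_{i_0}$. The goal is to show that a nonzero polar coefficient of $\eta$ cannot be annihilated by all these moments. In the model case $S=\proj^1$ with one puncture (where $V=\C[z]$, $\nabla_2=d+P_2'\,dz$, and the classes of $1,z,\dots,z^{d-2}$ form a basis of the cokernel of $\nabla_2$) this is a finite recurrence on the Laurent coefficients of $\eta$ forcing them to vanish one at a time from the top down; the top coefficient is detected first, giving $d^{(1)}_{i_0}=d^{(2)}_{i_0}$, and the remaining coefficients give equality of the full polar parts. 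In general I expect either such a recursion or a saddle-point evaluation of $\int_{\gamma}\eta s\,e^{h^{(2)}_{i_0}}$ as $\gamma$ approaches the ramification points. This passage from a purely local pole of $\eta$ to a genuinely nonzero global period is the step I expect to be the main obstacle, precisely because the naive local argument — multiplying $\eta$ by arbitrary Laurent tails, which would immediately force the local cokernel to vanish — is unavailable: the local expansions of global sections in $V$ are constrained by the behaviour at the other punctures and by the genus, so one must control the interaction between the local data at $p_{i_0}$ and the global section space.

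It remains to note where the nontriviality hypothesis enters. The assumption that $H_1(S^*_1,\RR;\C)$ is nontrivial is exactly what guarantees $r\ge 1$, i.e. that $K_{\HH}$ contains a nonzero period functional with which to detect $\eta$; without it — for instance on $\proj^1$ with a single puncture of order $1$ — there are no periods, the data $(L_{\HH},K_{\HH})$ carries no information about the polar part, and the conclusion can fail. Granting the period nonvanishing of the previous paragraph, $\eta$ is holomorphic at every $p_i$, and therefore $\HH_1=\HH_2$.
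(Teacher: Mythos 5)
Your reduction is sound, and in fact it reproduces the paper's Lemma \ref{exact} in coordinate-free form: the equality $K_{\HH_1}=K_{\HH_2}$, combined with the nondegenerate pairing of Theorem \ref{pairing} (plus finite-dimensionality of both sides, which you assert but should cite from \cite{kb1}; the paper itself only needs the injectivity direction, not your double-annihilator identity), gives exactly the paper's statement that $g_2f_1\bigl(\frac{df_1}{f_1}-\frac{df_2}{f_2}\bigr)\in d\OO_{\HH_1}$ for every meromorphic $g_2$ with $g_2f_2\in\OO_{\HH_2}$ --- your $\eta$ is this $\omega$ up to sign, and reducing the theorem to holomorphy of $\eta$ at the $p_i$ is legitimate. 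The genuine gap is the analytic core, which you explicitly leave as an ``expectation'': showing that a pole of $\eta$ produces a nonvanishing period against some admissible test section. The paper closes this by a trichotomy, none of whose tools appear in your sketch. For genus $\geq 1$ it uses Gusman's approximation theorem (Theorem \ref{approxn}): on a non-separating closed curve $\gamma$ avoiding the relevant zeros and poles, one chooses a continuous $u$ with $\int_{\gamma}ug_0f_1\omega\neq 0$ and approximates it uniformly by a meromorphic $v$ with pole only at $p_1$; then $g_2=vg_0$ has a nonzero period over the closed cycle $\gamma$, contradicting exactness. This is precisely the device that defeats the obstacle you correctly identified (global sections have constrained local expansions): prescribing values on a curve replaces the unavailable control of Laurent tails. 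For genus $0$ with $n\geq 2$ punctures, taking $g_2=z^kP$ for all $k\in\integers$ and computing residues at $p_1=0$ annihilates the entire Laurent series of $Pf_1\omega$, forcing $\omega\equiv 0$.

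In the one-puncture genus-zero case your model sketch is also not the mechanism that works: the paper does not run a top-down recurrence on Laurent coefficients. Instead it forms the $(d-1)\times d$ period matrix $M$, equal for both structures by hypothesis, shows it has rank $d-1$ (via Theorem \ref{pairing} and the basis $z^ke^{P_1}dz$, $k=0,\dots,d-2$, of $H^1_{dR,0}(S^*_1)$), and notes that $M$ annihilates the coefficient vectors of both $P_1'$ and $P_2'$, so $P_2'=\lambda P_1'$. The residual candidate $\lambda\neq 1$ --- i.e.\ $\eta$ proportional to $dP_1$, which no coefficient-by-coefficient argument excludes, since such an $\eta$ maps $\OO_{\HH_1}$ into $d\OO_{\HH_1}$ exactly when $H^1_{dR,0}(S^*_1)$ is trivial --- is ruled out by the nontriviality hypothesis. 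So your instinct about where that hypothesis enters is correct, but in the paper it is consumed by this last step rather than by a moment recursion, and without some substitute for Gusman's theorem, the residue argument, and the period-matrix rank computation, your proposal remains a correct reduction with its decisive step unproven.
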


\medskip

Finally, we mention briefly some appearances of functions with exponential singularities in the literature.
Certain functions with exponential singularities, namely the $n$-point {\it Baker-Akhiezer functions}
(\cite{baker}, \cite{akhiezer}), have been used in the algebro-geometric integration of
integrable systems (see, for example, \cite{krichever3},
\cite{krichever2} and the surveys \cite{krichever1}, \cite{dubrovin},
\cite{krichnov}, \cite{dubkrichnov}). Given a divisor $D$
on $S'$, an $n$-point Baker-Akhiezer function (with respect to the data $(\{p_j\}, \{h_j\}, D)$)
is a function $f$ in the space $\MM(S^*)$ satisfying the additional properties that the divisor $(f)$ of
zeroes and poles of $f$ on $S'$ satisfies $(f) + D \geq 0$, and that $f \cdot e^{-h_j}$ is holomorphic at $p_j$ for all
$j$. For $D$ a non-special divisor of degree at least $g$,
the space of such Baker-Akhiezer functions is known to have dimension $deg D - g + 1$.

\medskip

Functions and differentials with exponential singularities
on compact Riemann surfaces have also been studied by Cutillas Ripoll
(\cite{cutillas1}, \cite{cutillas2}, \cite{cutillas3}), where they
arise naturally in the solution of the {\it Weierstrass problem} of realizing arbitrary divisors
on compact Riemann surfaces, and by Taniguchi (\cite{taniguchi1}, \cite{taniguchi2}), where entire functions satisfying certain
topological conditions (called ``structural finiteness") are shown to be precisely those entire functions
whose derivatives have an exponential singularity at ${\infty}$, namely functions of the
form $\int Q(z) e^{P(z)} \ dz$, where $P, Q$ are polynomials.

\medskip

{\bf Acknowledgements.} This work grew out of a visit of the second author to TIFR, Mumbai.
He would like to thank TIFR for its hospitality.

\bigskip

\section{Log-Riemann surfaces of finite type and exp-algebraic curves}

\medskip

We recall some basic definitions and facts from \cite{bpm1}, \cite{bpm2},
\cite{bpm3}.

\medskip

\begin{definition} A log-Riemann surface is a pair $(S, \pi)$,
where $S$ is a Riemann surface and $\pi \,:\, S \,\longrightarrow\, \C$ is
a local holomorphic diffeomorphism such that the set of points $\RR$ added
to $S$ in the completion $S^* \,:=\, S \sqcup \RR$
with respect to the path metric induced by the flat
metric $|d\pi|$ is discrete.
\end{definition}

\medskip

The map $\pi$ extends to the metric completion $S^*$ as
a $1$-Lipschitz map. In \cite{bpm1} it is shown that the map $\pi$
restricted to a sufficiently small punctured metric neighbourhood
$B(w^*, r) - \{w^*\}$ of a ramification point is a covering of
a punctured disc $B(\pi(w^*), r) - \{\pi(w^*)\}$ in $\C$, and
so has a well-defined degree $1 \leq n \leq +\infty$, called the order
of the ramification point (we assume that the order is always at least $2$,
since order one points can always be added to $S$ and $\pi$ extended
to these points in order to obtain a log-Riemann surface).

\medskip

\begin{definition} A log-Riemann surface is of finite type if it has
finitely many ramification points and finitely generated fundamental group.
\end{definition}

\medskip

For example, the log-Riemann surface given by $(\C, \pi = e^z)$ is of finite type (with the
metric $|d\pi|$ it is isometric to the Riemann surface of the logarithm, which is simply
connected, with a single ramification point of infinite order), as is the log-Riemann surface
given by the Gaussian integral $(\C, \pi = \int e^{z^2} dz)$, which has two ramification points,
both of infinite order, as in the figure below:

\medskip

{\hfill {\centerline {\psfig {figure=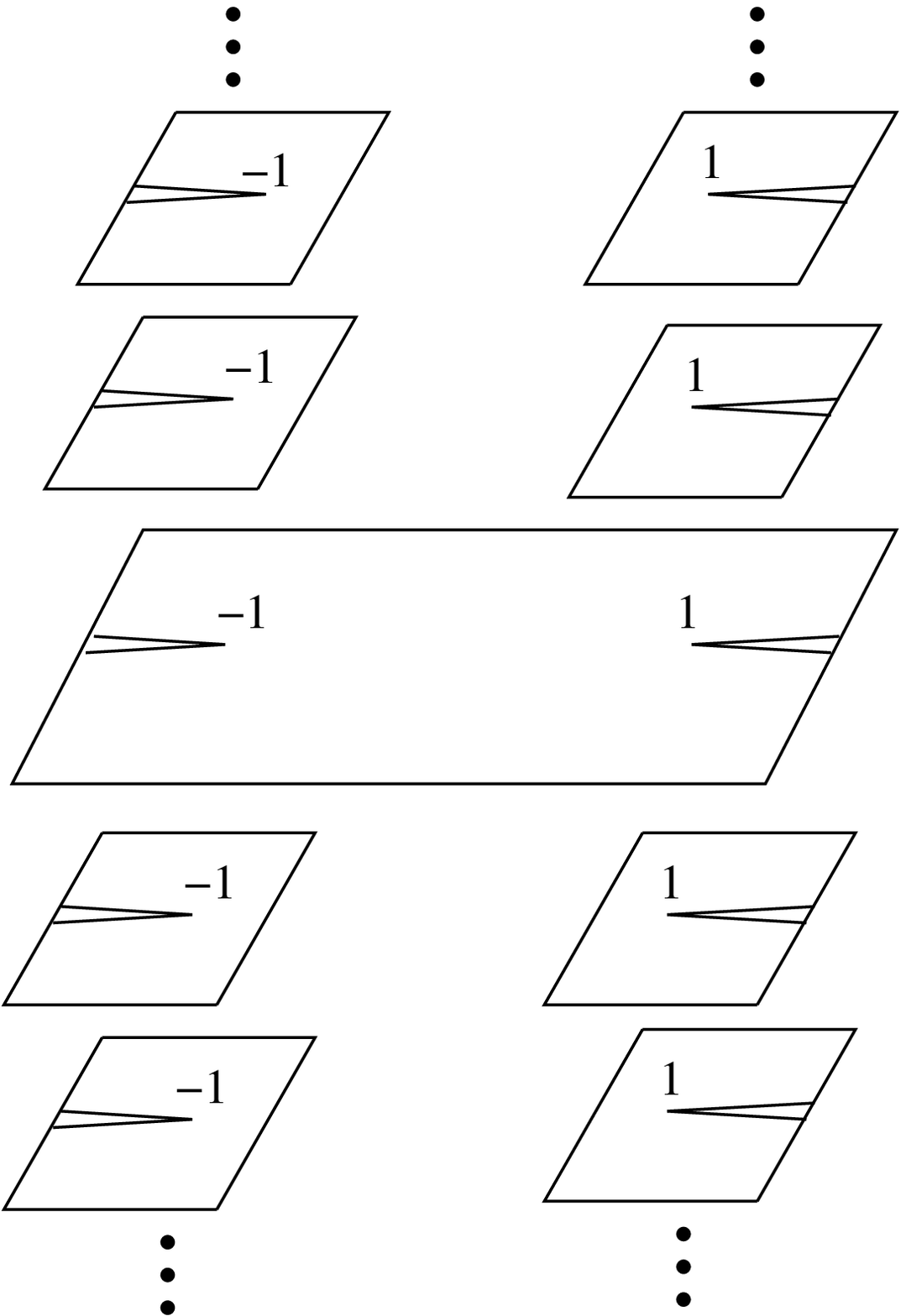,height=7cm}}}}

\medskip

\centerline{Log-Riemann surface of the Gaussian integral}

\medskip

In \cite{bpm3}, it is shown that a log-Riemann surface of finite type (which has at least one
infinite order ramification point) is of the form
$(S', \pi)$, where $S'$ is a punctured compact Riemann surface $S' = S - \{p_1, \cdots, p_n\}$
and $\pi$ is meromorphic on $S'$ and $d\pi$ has exponential singularities at the punctures
$p_1, \cdots, p_n$. Let $h_1, \cdots, h_n$ be the types of the exponential singularities
of $d\pi$ at the punctures $p_1, \cdots, p_n$.
As described in \cite{bpm3}, each puncture $p_j$ corresponds to an
end of the log-Riemann surface where $d_j$ infinite order ramification points are added,
$d_j$ being the order of the pole of $h_j$ at $p_j$.

\medskip

Let $w^*$ be an infinite order ramification point associated to a puncture $p_j$.
An $\epsilon$-ball $B_{\epsilon}$ around $w^*$ is isometric to the $\epsilon$-ball
around the infinite order ramification
point of the Riemann surface of the logarithm (given by cutting and pasting infinitely many
discs together), and there is an argument function $$\arg_{w^*} \,:\, B^*_{\epsilon}
\,\longrightarrow\, \R$$ defined
on the punctured ball $B^*_{\epsilon}$. While the function $\pi$, which is of the form
$\pi = \int e^{h_j} \alpha_j$ in a punctured neighbourhood of $p_j$ for some meromorphic $1$-form
$\alpha_j$, extends continuously to $w^*$
for the metric topology on $S^*$, in general functions of the form $f = \int e^{h_j} \alpha$
(where $\alpha$ is a $1$-form meromorphic near $p_j$) do {\bf not} extend continuously
to $w^*$ for the metric topology (\cite{bpm4}). Limits of these functions in sectors
$\{ p \in B^*_{\epsilon} \,\mid\, c_1 < \arg_{w^*}(p) < c_2 \}$ do exist however and are independent
of the sector; we say that the function is {\it Stolz continuous} at points of $\RR$.

\medskip

\begin{definition} Define spaces of functions and $1$-forms on $S^*$:
\begin{align*}
\MM(S^*) & := \{ f \hbox{ meromorphic function on } S'\,\mid\, f \hbox{ has exponential singularities at } \\
         & \quad \quad p_1, \cdots, p_n \hbox{ of types } h_1, \cdots, h_n \} \\
\OO(S^*) & := \{ f \in \MM(S^*) \,\mid\, f \hbox{ holomorphic on } S' \} \\
\Omega(S^*) & := \{ \omega \hbox{ meromorphic } 1-\hbox{form on } S' \,\mid\, \omega \hbox{ has exponential singularities at } \\
         & \quad \quad p_1, \cdots, p_n \hbox{ of types } h_1, \cdots, h_n \} \\
\Omega^0(S^*) & := \{ \omega \in \Omega(S^*) \,\mid\, \omega \hbox{ holomorphic on } S' \}
\end{align*}
\end{definition}

\medskip

We remark that these are simply the spaces $\MM_{\HH}, \OO_{\HH}, \Omega_{\HH}, \Omega^0_{\HH}$
defined in the introduction, where $\HH = \{ [h_1], \cdots, [h_n] \}$.
Functions in $\MM(S^*)$ are Stolz continuous at points of $\RR$ taking the value $0$ there. The integrals
of $1$-forms $\omega$ in $\Omega_{II}(S^*)$ over curves $\gamma \,:\, [a,\, b]
\,\longrightarrow\, S^*$ joining points $w^*_1, w^*_2$ of
$\RR$ converge if $\gamma$ is disjoint from the poles of $\omega$ and
tends to these points through sectors $$\{ p \in B^*_{\epsilon} \,\mid\, c_1 < \arg_{w^*_1}(p) < c_2 \},\ \
\{ p \in B^*_{\epsilon} \,\mid\, c_1 < \arg_{w^*_2}(p) < c_2 \}$$ (since any primitive of $\omega$ on a sector
is Stolz continuous).

\medskip

The definitions of the above spaces only depend on the types $\{ [h_i] \in \MM_{p_i}/\OO_{p_i} \}$ of the exponential
singularities of the $1$-form $d\pi$, which do not change if $d\pi$ is multiplied by a meromorphic function.
It is natural to define then a structure less rigid than that of a log-Riemann surface of finite type.

\medskip

\begin{definition}[{Exp-algebraic curve}] Given a punctured compact Riemann surface $S' = S - \{p_1, \cdots, p_n\}$,
two meromorphic functions $\pi_1, \pi_2$ on $S'$ inducing log-Riemann surface structures of finite type
are considered equivalent if $d\pi_1/d\pi_2$ is meromorphic on the compact surface $S$. An exp-algebraic curve is an
equivalence class of such log-Riemann surface structures of finite type.
\end{definition}

\medskip

It follows from the uniformization theorem of \cite{bpm3} that
an exp-algebraic curve is given by the data of a punctured compact Riemann surface and
$n$ (equivalence classes of) germs of meromorphic functions $\mathcal{H} = \{ [h_i] \in \MM_{p_i}/\OO_{p_i} \}$
with poles at the punctures.

\medskip

We can associate
a topological space $\widehat{S}$ to an exp-algebraic curve, given as a set by $\widehat{S} = S' \cup \RR$,
where $\RR$ is the set of infinite ramification points added with respect to any map $\pi$ in the
equivalence class of log-Riemann surfaces of finite type, and the topology is the weakest
topology such that all maps $\widetilde{\pi}$ in the equivalence class extend continuously to $\widehat{S}$.

\medskip

Finally, for a meromorphic function $f$ on $S'$ (respectively, meromorphic $1$-form $\omega$ on $S'$)
with exponential singularities of
types $h_1, \cdots, h_n$ at points $p_1, \cdots, p_n$ we can define a divisor $(f) = \sum_{p \in S} n_p \cdot p$
(respectively, $(\omega) = \sum_{p \in S} m_p \cdot p$) by $n_p = ord_p(f)$ if $p \in S'$ and
$n_p = ord_{p_i}(g)$ if $p = p_i$, where $g$ is a germ of meromorphic function at $p_i$ such that $f = ge^{h_i}$
(respectively, $m_p = ord_p(\omega)$ if $p \in S'$ and
$n_p = ord_{p_i}(\alpha)$ if $p = p_i$, where $\alpha$ is a germ of meromorphic $1$-form
at $p_i$ such that $\omega = \alpha e^{h_i}$).

\medskip

Note that the divisor $(f)$ can also be defined by $n_p = Res(df/f, p)$, so it follows from the Residue
Theorem applied to the meromorphic $1$-form $df/f$ that the divisor $(f)$ has degree zero.

\medskip

\section{Exp-algebraic curves and line bundles with meromorphic connections}

\medskip

Let $(S, \mathcal{H} \,=\, \{h_1,\, \cdots,\, h_n\})$ be an exp-algebraic curve, where $S$ is a compact Riemann surface of genus $g$
and $h_1, \cdots, h_n$ are germs of meromorphic functions at points $p_1, \cdots, p_n$. Let $\Omega(S)$ be the
space of holomorphic $1$-forms on $S$. The data $\mathcal{H}$ defines a degree zero line bundle $L_{\mathcal{H}}$
together with a transcendental section $s_{\mathcal H}$ of this line bundle which is non-zero on the punctured
surface $S'$ as follows:

\medskip

Solving the Mittag-Leffler problem locally for the distribution $\{h_1, \cdots, h_n\}$ gives meromorphic
functions on an open cover such that the differences are holomorphic on intersections, and hence gives
an element of $H^1(S, \OO)$. Under the exponential this gives a degree zero line bundle as an element
of $H^1(S, \OO^*)$. Explicitly this is constructed as follows:

\medskip

Let $$B_1, \,\cdots,\, B_n$$ be pairwise disjoint coordinate disks around the
punctures $p_1, \cdots, p_n$ and let $V$ be an open subset of $S'$ intersecting each disk $B_i$ in
an annulus $U_i = V \cap B_i$ around $p_i$ such that $\{B_1, \cdots, B_n, V \}$ is an open cover of $S$.
Define a line bundle $L_{\mathcal{H}}$ by taking the functions $e^{-h_i}$ to be the transition functions
for the line bundle on the intersections $U_i$. Define a holomorphic non-vanishing section
of $L_{\mathcal{H}}$ on $S'$ by:

\begin{align*}
s_{\mathcal{H}} := & \quad \quad 1 & \quad \hbox{ on } V \\
                   & \quad \quad e^{-h_i} & \quad \hbox{ on } B_i - \{p_i\}
\end{align*}

Define a connection $\nabla_{\mathcal{H}}$ on $L_{\mathcal{H}}$ by declaring
that $\nabla_{\mathcal{H}}(s_{\mathcal{H}}) \,=\, 0$. Then for any holomorphic section
$s$ on $V$, we have $s = f s_{\mathcal{H}}$ for some holomorphic function $f$, and also
$\nabla_{\mathcal{H}}(s) \,=\, df s_{\mathcal{H}}$, so $\nabla_{\mathcal{H}}$ is holomorphic
on $V$. On each disk $B_i$, letting $s_i$ be the section which is a constant equal to $1$
on $B_i$ (with respect to the trivialization on $B_i$), for any holomorphic section
$s$ on $B_i$, we have $s = f s_i$ for some holomorphic function $f$, and
also $s_i \,=\, e^{h_i} s_{\mathcal{H}}$,
so
\begin{align*}
\nabla_{\mathcal{H}}(s) & = \nabla_{\mathcal{H}}(f s_i) \\
                        & = \nabla_{\mathcal{H}}(fe^{h_i}s_{\mathcal{H}}) \\
                        & = (df + f dh_i)e^{h_i} s_{\mathcal{H}} \\
                        & = (df + f dh_i) s_i \\
\end{align*}
thus the connection $1$-form of $\nabla_{\mathcal{H}}$ with respect to $s_i$ is given by
$dh_i$, so $\nabla_{\mathcal{H}}$ is meromorphic on $B_i$ with a single pole at $p_i$
of order $d_i + 1 \geq 2$.

\medskip

Let $s^*_{\HH}$ be the unique section of the dual bundle $L^*_{\mathcal{H}}$ on $S'$
such that $s_{\HH} \otimes s^*_{\HH}\,=\, 1$ on $S'$. Then for any non-zero meromorphic
section
$s$ of $L_{\HH}$, the function $$f \,:= \,s \otimes s^*_{\mathcal{H}}$$ is meromorphic on $S'$
with exponential singularities at $p_1, \cdots, p_n$ of types $h_1, \cdots, h_n$, and the
divisors of $s$ and $f$ coincide. Thus the
line bundle $L_{\mathcal{H}}$ has degree zero. In summary we have:

\medskip

\begin{theorem} \label{existence} For any log-Riemann surface of finite type $S^*$, the line bundle $L_{\mathcal{H}}$
has degree zero and the maps
$$s \,\longmapsto \, s \otimes s^*_{\HH},\ \ f \,\longmapsto \,f \cdot s_{\HH}$$
(respectively, $$\alpha \,\longmapsto\, \alpha \otimes s^*_{\HH},
\ \ \omega \,\longmapsto\, \omega \cdot s_{\HH})$$
are mutually inverse isomorphisms between the spaces of meromorphic sections of
$L_{\HH}$ and $\MM(S^*)$ (respectively, the spaces of meromorphic $L_{\HH}$-valued
$1$-forms and $\Omega(S^*)$) preserving divisors.

\medskip

In particular the vector spaces $\MM(S^*), \OO(S^*), \Omega(S^*), \Omega_{II}(S^*), \Omega^0(S^*)$
are non-zero.
\end{theorem}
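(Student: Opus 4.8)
The plan is to reduce the entire statement to a single local computation near each puncture, together with the degree-zero fact for divisors of functions in $\MM(S^*)$ that was already recorded before the statement. First I would observe that both maps are $\C$-linear and formally mutually inverse: since $s_{\HH} \otimes s^*_{\HH} = 1$ on $S'$, we have $(s \otimes s^*_{\HH}) \cdot s_{\HH} = s$ and $(f \cdot s_{\HH}) \otimes s^*_{\HH} = f$, and identically in the $1$-form case. Consequently bijectivity is automatic, and the only genuine content is (i) that $s \mapsto s \otimes s^*_{\HH}$ carries a meromorphic section of $L_{\HH}$ into $\MM(S^*)$ (and its inverse carries $\MM(S^*)$ back to meromorphic sections), and (ii) that these bijections preserve divisors.

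For (i) and (ii) I would work in the trivialization of $L_{\HH}$ over the disk $B_i$. Writing a meromorphic section as $s = f_{B_i}\, s_i$ with $f_{B_i}$ meromorphic on $B_i$ (where $s_i \equiv 1$ is the constant frame on $B_i$), and recalling $s_i = e^{h_i} s_{\HH}$, hence $s_{\HH} = e^{-h_i} s_i$ and $s^*_{\HH} = e^{h_i} s_i^*$, one computes $f := s \otimes s^*_{\HH} = f_{B_i}\, e^{h_i}$ on $B_i - \{p_i\}$. This exhibits $f$ as $g\, e^{h_i}$ with $g = f_{B_i}$ a germ of meromorphic function at $p_i$, i.e. $f$ has an exponential singularity of type $h_i$ at $p_i$; away from the punctures $s_{\HH}$ is holomorphic and non-vanishing, so $f$ is meromorphic on $S'$, giving $f \in \MM(S^*)$. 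For divisor preservation, on $S'$ multiplication by the non-vanishing $s_{\HH}$ alters no orders, while at $p_i$ the divisor $(f)$ is defined precisely through the germ $g = f\, e^{-h_i} = f_{B_i}$, so $n_{p_i} = ord_{p_i}(f_{B_i}) = ord_{p_i}(s)$, the order of the local representative of $s$. Hence the divisor of $f$ and the divisor of $s$ as a section agree at every point of $S$. The $1$-form assertion is verbatim the same, replacing meromorphic functions by meromorphic $1$-forms and $L_{\HH}$ by $L_{\HH}$-valued forms.

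The degree-zero claim then follows at once: $S$ being compact, $L_{\HH}$ admits a non-zero meromorphic section $s$, whose section-divisor has degree $\deg L_{\HH}$; by the previous paragraph this divisor equals $(f)$ for the corresponding $f \in \MM(S^*)$, and $(f)$ has degree zero by the Residue Theorem applied to $df/f$, as already observed. Finally, for the non-vanishing of the listed spaces I would invoke Riemann--Roch. Since $\deg L_{\HH} = 0$, for a divisor $D$ supported on $\{p_1, \dots, p_n\}$ of sufficiently large degree one has $h^0(L_{\HH} \otimes \OO(D)) \geq \deg D - g + 1 > 0$ and $h^0(L_{\HH} \otimes K_S \otimes \OO(D)) > 0$; the resulting sections have poles confined to the punctures and therefore correspond, under the isomorphisms just established, to non-zero elements of $\OO(S^*)$ and $\Omega^0(S^*)$, hence a fortiori of the larger spaces $\MM(S^*), \Omega(S^*)$, with the remaining space handled by the same construction.

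The argument is essentially bookkeeping, and the one point demanding genuine care is the matching of conventions at the punctures in step (ii): one must keep straight that the transition and trivialization data are the exponentials $e^{\pm h_i}$, so that the order of $s$ in the $B_i$-frame coincides exactly with the order assigned to $f$ by the divisor convention $n_{p_i} = ord_{p_i}(f\, e^{-h_i})$. Once that convention is pinned down, well-definedness, divisor preservation, and degree zero all descend from the single local identity $f = f_{B_i}\, e^{h_i}$.
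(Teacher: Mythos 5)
Your proposal is correct and takes essentially the same route as the paper: the entire content is the local identity $f = f_{B_i}\,e^{h_i}$ coming from $s_i = e^{h_i} s_{\HH}$ in the $B_i$-trivialization, which gives membership in $\MM(S^*)$ and divisor preservation at once, with degree zero then deduced from the residue-theorem fact about $df/f$ recorded at the end of Section 2. The only divergence is that where the paper simply asserts the subspaces of sections holomorphic on $S'$ are non-empty, you justify this by a Riemann--Roch count for $L_{\HH}\otimes\OO(D)$ with $D$ supported at the punctures --- a standard argument that correctly fills in a detail the paper leaves implicit.
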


\medskip

\begin{proof}
Since the isomorphisms above preserve divisors,
the spaces $\OO(S^*), \Omega^0(S^*)$ correspond to the spaces of meromorphic sections of $L_{\HH}$ and
meromorphic $L_{\HH}$-valued $1$-forms which are holomorphic on $S'$, both of which are non-empty.
\end{proof}

\medskip

\begin{prop} The correspondence $\HH \mapsto (L_{\HH}, \nabla_{\HH})$ gives a one-to-one
correspondence between exp-algebraic structures on $S$ and degree zero line bundles on $S$ with
meromorphic connections with all poles of order at least two, zero residues, and trivial monodromy.
\end{prop}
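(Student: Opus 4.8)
The plan is to establish the bijection by constructing an explicit inverse, handling well-definedness, surjectivity and injectivity separately. First I would check that the map lands in the claimed target. The construction preceding the statement already shows $L_\HH$ has degree zero and that the connection form of $\nabla_\HH$ near $p_i$ is $dh_i$, hence a pole of order $d_i + 1 \geq 2$. The residue of $dh_i$ at $p_i$ vanishes, since the differential of a meromorphic germ never has a residue (the coefficient of $z^{-1}$ in $d(\sum_k a_k z^k)$ is $0$), and the monodromy is trivial because $s_\HH$ is a global nowhere-vanishing horizontal section on $S'$, so $(L_\HH, \nabla_\HH)$ has all the required properties.

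For surjectivity, I would reconstruct $\HH$ from an arbitrary pair $(L, \nabla)$ of the stated type. A meromorphic connection on a line bundle over a Riemann surface is flat away from its poles (its curvature, being of type $(2,0)$, vanishes), so on $S' = S \setminus \{p_1, \ldots, p_n\}$ triviality of the monodromy yields a global nowhere-zero horizontal section $s$. Near each pole $p_i$ choose a holomorphic trivialization $t_i$ and write $\nabla t_i = \eta_i \otimes t_i$, where $\eta_i$ is meromorphic of pole order $d_i + 1 \geq 2$ with zero residue; the zero-residue condition lets me integrate $\eta_i = dh_i$ for a meromorphic germ $h_i$ with a pole of order $d_i \geq 1$. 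Horizontality $\nabla s = 0$ then forces $s = c_i e^{-h_i} t_i$, and after rescaling $t_i$ I may assume $c_i = 1$. Replacing $t_i$ by a holomorphic unit multiple $u$ changes $h_i$ only by the holomorphic summand $\log u$, so the class $[h_i] \in \MM_{p_i}/\OO_{p_i}$ is well defined and $\HH := \{[h_1], \ldots, [h_n]\}$ is an exp-algebraic structure.

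To finish surjectivity I would exhibit the isomorphism $\Phi \colon L_\HH \to L$ sending $s_\HH$ to $s$ over $S'$; this is connection-preserving there since both sections are horizontal. Across $p_i$ it extends: on $B_i$ one has $s_\HH = e^{-h_i} s_i$ and $s = e^{-h_i} t_i$, whence $\Phi(s_i) = t_i$, a holomorphic nowhere-zero section on either side, so $\Phi$ is a holomorphic bundle isomorphism near $p_i$ intertwining the connections (both forms equal $dh_i$). Injectivity is the same local computation in reverse: an isomorphism $(L_{\HH_1}, \nabla_{\HH_1}) \cong (L_{\HH_2}, \nabla_{\HH_2})$ carries poles to poles, hence fixes the punctures, and near each $p_i$ is a gauge change by a holomorphic unit $u$, which alters the connection form $dh_i$ only by the holomorphic exact term $d \log u$; thus $[h_i]$, and therefore $\HH$, is unchanged.

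The main obstacle I anticipate is the surjectivity step — converting the abstract hypotheses into concrete data: producing the local primitives $h_i$ from the zero-residue assumption, the global horizontal section $s$ from trivial monodromy, and above all verifying that the map $\Phi$, defined initially only over $S'$, genuinely extends to a bundle isomorphism across each puncture rather than acquiring a singularity there.
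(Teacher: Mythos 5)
Your proposal is correct and takes essentially the same route as the paper: the paper likewise sends $(L,\nabla)$ back to $\HH$ by integrating the connection $1$-forms $\omega_i = dh_i$ near the poles (possible exactly because of the zero-residue, pole-order-at-least-two hypotheses), and verifies the forward direction via the horizontal section $s_{\HH}$ and the local forms $dh_i$. The only difference is thoroughness: the paper compresses the inverse-verification into the single remark that $\HH(L_{\HH}, \nabla_{\HH}) = \HH$, whereas you additionally construct the global horizontal section $s$ from flatness plus trivial monodromy and check that the isomorphism $\Phi$ with $s_{\HH} \mapsto s$ extends across the punctures, making explicit the composition $(L,\nabla) \mapsto \HH(L,\nabla) \mapsto (L_{\HH}, \nabla_{\HH}) \cong (L, \nabla)$ that the paper leaves implicit.
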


\medskip

\begin{proof}
Since the connection $1$-form of $\nabla_{\HH}$ is given by $dh_i$ on $B_i$,
all residues of $\nabla_{\HH}$ are equal to zero, while the monodromy of $\nabla_{\HH}$ is trivial
since $s_{\HH}$ is a single-valued horizontal section.

\medskip

Conversely, given such a meromorphic connection $\nabla$ on a degree zero line bundle $L$, if $p_1, \cdots, p_n$
are the poles of $\nabla$ and $\omega_1, \cdots, \omega_n$ are the connection $1$-forms of $\nabla$ with
respect to trivializations near $p_1, \cdots, p_n$, then each $\omega_i$ has zero residue at $p_i$ and pole
order at least two, hence there exist meromorphic germs $h_1, \cdots, h_n$ near $p_1, \cdots, p_n$ such that
$\omega_i = dh_i$. We obtain an exp-algebraic curve $(S, \HH(L, \nabla))$.

\medskip

It is clear for an exp-algebraic curve $(S, \HH)$ that $\HH(L_{\HH}, \nabla_{\HH}) = \HH$, so the
correspondences are inverses of each other.
\end{proof}

\medskip

Finally we remark that by Serre Duality, the degree zero line bundle $L_{\mathcal{H}}$, given as an element of $H^1(S, \OO)$,
can also be described as an element of $H^0(S, \Omega)^* = \Omega(S)^*$ using residues, as the linear
functional
\begin{align*}
Res_{\mathcal{H}} : \Omega(S) & \rightarrow \C \\
                      \xi     & \mapsto  \sum_i Res(\xi \cdot h_i, p_i) \\
\end{align*}

\medskip

\section{Torelli-type theorem for exp-algebraic curves}

\medskip

We proceed to the proof of Theorem \ref{mainthm}. We will need the following theorems from
\cite{kb1} and \cite{gusman1}:

\medskip

\begin{theorem}[\cite{kb1}]\label{pairing}
 Let $S^* = S' \cup \RR$ be a log-Riemann surface of finite type,
and let $H^1_{dR,0}(S^*) \,=\, \Omega^0(S^*)/d\OO(S^*)$. Then the pairing
$H_1(S^*, \RR ; \C) \times H^1_{dR,0}(S^*) \,\longrightarrow\, \C$,
given by integration along curves, is nondegenerate.
\end{theorem}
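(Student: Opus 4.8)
The plan is to show that the map $[\omega]\mapsto\bigl(\gamma\mapsto\int_\gamma\omega\bigr)$ embeds $H^1_{dR,0}(S^*)$ into $H_1(S^*,\RR;\C)^*$, and then to check by a dimension count that this embedding is surjective; nondegeneracy in both arguments follows at once. First I would record that the pairing descends to the stated quotients. A holomorphic $1$-form on $S'$ is $d$-closed, so integration over a relative boundary $\partial\Sigma$ vanishes by Stokes (the integrand decays in the sink directions, so $\int_\Sigma d\omega$ converges), and if $\omega=df$ with $f\in\OO(S^*)$ then $\int_\gamma\omega=f(b)-f(a)$ for the endpoints $a,b\in\RR$, which is $0$ because every element of $\MM(S^*)$ is Stolz continuous with value $0$ on $\RR$.

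For injectivity on the cohomology side, suppose $\omega\in\Omega^0(S^*)$ satisfies $\int_\gamma\omega=0$ for every $\gamma\in H_1(S^*,\RR;\C)$. Fix a base point $w_0^*\in\RR$ and set $f(p)=\int_{w_0^*}^p\omega$ along any path in $S^*$. This is well defined: two paths from $w_0^*$ to $p$ differ by an absolute $1$-cycle, whose class lies in the image of $H_1(S^*;\C)\to H_1(S^*,\RR;\C)$ and hence pairs to $0$ with $\omega$. By construction $f$ is a local primitive of the holomorphic form $\omega$, so it is holomorphic on $S'$, and $f(w^*)=0$ for every $w^*\in\RR$ since a path from $w_0^*$ to $w^*$ is a relative cycle. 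It remains to see $f\in\OO(S^*)$. Writing $\omega=A\,e^{h_i}\,dz$ near $p_i$ with $A$ meromorphic, repeated integration by parts — legitimate because $h_i'$ has a pole of order $d_i+1\ge 2$, so dividing by $h_i'$ strictly lowers the pole order of the coefficient at each step — shows that a primitive of $\omega$ is again of the form $g\,e^{h_i}$ with $g$ meromorphic. The globally defined $f$ agrees with such a $g\,e^{h_i}$ up to an additive constant on each sink sector, and the normalization $f(w^*)=0$ together with $e^{h_i}\to 0$ there pins the constant to $0$; thus $f=g\,e^{h_i}\in\OO(S^*)$ and $[\omega]=0$.

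There remains the equality of (finite) dimensions. Under the isomorphisms of Theorem \ref{existence} the complex $\OO(S^*)\xrightarrow{d}\Omega^0(S^*)$ is carried to $V\xrightarrow{\nabla_\HH}W_\HH$, where $V$ and $W_\HH$ are the meromorphic sections, respectively meromorphic $L_\HH$-valued $1$-forms, that are holomorphic on $S'$; that is, $H^1_{dR,0}(S^*)$ is the algebraic de Rham cohomology of $(L_\HH,\nabla_\HH)$ on the affine curve $S'$. The connection form $dh_i$ has a pole of order $d_i+1$, so the irregularity at $p_i$ is $d_i$, and the only meromorphic horizontal section would be a multiple of $s_\HH=e^{-h_i}$ near $p_i$, which is not meromorphic, so $H^0_{dR}=0$. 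The Euler–Poincaré formula of Deligne for connections with irregular singularities gives $\chi_{dR}=\chi(S')-\sum_i d_i$, and with $H^0_{dR}=0$ this yields $\dim H^1_{dR,0}(S^*)=-\chi(S')+N=(2g-2+n)+N$, where $N=\sum_i d_i$. On the topological side, the long exact sequence of the pair $(S^*,\RR)$ together with $H_1(\RR)=0$ and the connectedness of $S^*$ gives $\dim H_1(S^*,\RR;\C)=b_1(S^*)+N-1$; identifying the homotopy type of the metric completion shows that adjoining the $N$ sink points leaves the Euler characteristic unchanged, $\chi(S^*)=\chi(S')=2-2g-n$, whence $b_1(S^*)=2g-1+n$ and $\dim H_1(S^*,\RR;\C)=(2g-2+n)+N$ as well.

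Since the cohomology-to-dual map is injective and the two spaces have equal finite dimension, it is an isomorphism; its transpose is the homology-to-dual map, which is therefore also injective, giving nondegeneracy in both arguments. The main obstacle, where the genuine content lies, is the dimension count: analytically one must invoke (or, in this rank-one case, reprove) the index formula for the irregular connection $(L_\HH,\nabla_\HH)$, and topologically one must pin down the homotopy type of the completion $S^*$ precisely enough to evaluate $\chi(S^*)$. The hypothesis that each $h_i$ has a genuine pole, so that $dh_i$ has pole order at least two, is essential at both places: it is what makes the local primitive $g\,e^{h_i}$ meromorphic in the injectivity step and what produces the irregularity contribution $d_i$ in the index formula.
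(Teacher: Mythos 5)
Note first that the paper does not prove this statement at all: it is quoted with attribution from \cite{kb1}, so there is no internal proof to compare against, and your argument has to be judged on its own merits. Your skeleton (embed $H^1_{dR,0}(S^*)$ into $H_1(S^*,\RR;\C)^*$, then match finite dimensions) is a legitimate strategy, and your bookkeeping is in fact consistent: both sides come out to $2g-2+n+\sum_i d_i$, the identification of $\Omega^0(S^*)/d\OO(S^*)$ with the de Rham cohomology of $(L_{\HH},\nabla_{\HH})$ on the affine curve $S'$ is exactly Theorem \ref{existence}, and $H^0_{dR}=0$ is argued correctly.

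The genuine gap is in the injectivity step, at the one sentence where the analytic content lives: the claim that repeated integration by parts shows a primitive of $\omega=A\,e^{h_i}\,dz$ is of the form $g\,e^{h_i}$ with $g$ \emph{meromorphic}. The iteration $A\,e^{h}\,dz = d\bigl((A/h')e^{h}\bigr)-\bigl(A/h'\bigr)'e^{h}\,dz$ lowers the pole order of the coefficient by $d_i$ at each step, but it never terminates: once the coefficient is holomorphic its order keeps increasing, and what you produce is only a \emph{formal} power series solution of $g'+g\,h_i'=A$, which in general diverges. This is the classical Euler phenomenon: for $h=-1/z$ (so $d_i=1$) and $A=1/z$ the recursion forces $c_{l+2}=a_l-(l+1)c_{l+1}$, giving coefficients of factorial growth. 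Worse, a function of the form $g\,e^{h_i}$ with $g$ meromorphic is automatically single-valued near $p_i$, so your claim already fails outright whenever $\operatorname{Res}_{p_i}(A\,e^{h_i})\neq 0$. Your two steps are in fact inconsistent with each other: if every (single-valued) primitive were locally $g\,e^{h_i}$ with $g$ meromorphic, the local meromorphic de Rham cohomology at $p_i$ would be cut out by the residue alone, hence of dimension at most $1$, contradicting the irregularity contribution $d_i$ in the very Euler--Poincar\'e formula you invoke for the dimension count. The local obstruction space has dimension exactly $d_i$, and for $d_i\geq 2$ the $d_i-1$ conditions beyond the residue are transcendental --- they are precisely the relative periods into the $d_i$ ramification points over $p_i$. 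So the repair cannot be local: you must use the global hypothesis that \emph{all} relative periods of $\omega$ vanish (equivalently, that $f$ vanishes at all $d_i$ points of $\RR$ over $p_i$, which supplies exactly the $d_i$ missing conditions) to control the sectorial asymptotics of $f\,e^{-h_i}$ and conclude that it is meromorphic rather than merely asymptotic to a formal series, sector by sector with Stokes jumps. That asymptotic analysis is where the theorem's content lies in \cite{kb1}, and it is absent from your proposal; the phrase ``pins the constant to $0$'' conflates a $d_i$-dimensional obstruction with a single additive constant.
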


\medskip

\begin{theorem}[{Gusman, \cite{gusman1}}] \label{approxn}
Let $S$ be a compact Riemann surface and $E \subset S$ a closed
subset such that $S - E$ has finitely many connected components $V_1, \cdots, V_m$,
and for each $i$ let $q_i$ be a point of $V_i$. Then any continuous function $f$ on $E$
which is holomorphic in the interior of $E$ can be uniformly approximated on $E$ by functions
meromorphic on $S$ with poles only in the set $\{q_1, \cdots, q_m\}$.
\end{theorem}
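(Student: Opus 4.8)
The plan is to factor the approximation through an intermediate class, proving the theorem in two stages: first a \emph{Mergelyan-type} step, approximating $f$ uniformly on $E$ by a function holomorphic on a full neighbourhood of $E$; then a \emph{Runge-type} (pole-pushing) step, approximating any such neighbourhood-holomorphic function uniformly on $E$ by meromorphic functions on $S$ whose poles are confined to the prescribed points $q_1, \cdots, q_m$. Granting both stages, if $g$ is holomorphic near $E$ with $\sup_E |f - g| < \varepsilon/2$ and $h$ is meromorphic on $S$ with poles only in $\{q_1, \cdots, q_m\}$ and $\sup_E |g - h| < \varepsilon/2$, then $\sup_E |f - h| < \varepsilon$, which is the assertion.

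For the first stage I would work locally and patch. The hypothesis that $f$ is continuous on $E$ and holomorphic on the interior $E^\circ$ is exactly the hypothesis of Mergelyan's theorem, and the conclusion sought is that $f$ lies in the uniform closure on $E$ of the functions holomorphic in a neighbourhood of $E$. Covering $E$ by finitely many coordinate charts and choosing a smooth partition of unity subordinate to the cover, I would reduce the global problem to a local one in the plane, where the planar Mergelyan/Vitushkin theory applies; the error terms introduced by the partition of unity are $\bar\partial$-data supported in small annuli and are absorbed by solving $\bar\partial u = \phi$ on $S$ with uniform sup-norm control. The role of the hypothesis that $S - E$ has only finitely many components is to guarantee that there are no continuous-analytic-capacity obstructions to this rational-type approximation: with finitely many complementary components the class of functions continuous on $E$ and holomorphic on $E^\circ$ coincides with the uniform closure of the functions holomorphic near $E$, exactly as $R(K) = A(K)$ holds for planar compacta with finitely connected complement.

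For the second stage I would invoke the Runge approximation theorem on the compact surface $S$ in its form with prescribed poles: a function holomorphic in a neighbourhood of a compact $K \subset S$ can be approximated uniformly on $K$ by meromorphic functions on $S$ whose poles lie in any fixed set that meets every connected component of $S - K$. Taking $K = E$, the set $\{q_1, \cdots, q_m\}$ meets every component $V_i$ of $S - E$ by construction, and finiteness of the number of components is what keeps this pole set finite. Concretely, one checks that $E$ is holomorphically convex in the open surface $S - \{q_1, \cdots, q_m\}$ --- each complementary component $V_i - \{q_i\}$ accumulates at the removed point $q_i$ and so fails to be relatively compact there --- whence a Behnke--Stein argument yields the approximants, the poles being slid along arcs inside each $V_i$ onto the marked point $q_i$, each slide itself a uniform-on-$E$ approximation.

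The main obstacle is the first stage. Pole-pushing in the second stage is classical once the pole set is known to meet every complementary component, and here it does so by the very choice of the $q_i$. The delicate point is the passage from a function merely continuous on $E$ and holomorphic on its interior to one holomorphic on an actual neighbourhood: on a Riemann surface this requires the Vitushkin-type localization together with uniform estimates for the $\bar\partial$-equation, and it is precisely here that the finiteness of the number of complementary components is indispensable, since without it the rational approximation can fail.
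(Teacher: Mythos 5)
You should know at the outset that the paper contains no proof of Theorem \ref{approxn}: it is imported verbatim from Gusman \cite{gusman1} and used as a black box in the proof of Theorem \ref{mainthm}, so there is no internal argument to compare yours against, and your proposal can only be judged on its own merits. On those merits, the two-stage architecture is the right skeleton for results of this type: a Mergelyan-type step from $A(E)$ (continuous on $E$, holomorphic on the interior) to functions holomorphic on a neighbourhood of $E$, followed by a Runge-type step confining the poles to $\{q_1, \cdots, q_m\}$; and you correctly identify where the finiteness of the number of complementary components enters each stage.

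The genuine gap is in your second stage. Behnke--Stein applied to the open surface $S' = S - \{q_1, \cdots, q_m\}$, with $E$ Runge in $S'$ as you verify, produces approximants that are holomorphic on $S'$ --- and such functions may have \emph{essential} singularities at the punctures; they need not be meromorphic on $S$, which is what the theorem demands. So the conclusion of your Behnke--Stein step is strictly weaker than the statement, and a further density assertion (that meromorphic functions on $S$ with poles in $\{q_1, \cdots, q_m\}$ are dense, uniformly on $E$, among functions holomorphic on $S'$ or near $E$) is needed and not supplied. Your fallback of ``sliding poles along arcs inside each $V_i$'' does not transplant verbatim from the plane either: on a surface of genus $g \geq 1$ there is no meromorphic function with a single simple pole, so one cannot slide a bare pole $1/(z - \zeta)$; the classical repair is to run a Cauchy-kernel/Riemann-sum argument with a kernel $\Omega(z, \zeta)$ whose polar divisor in $z$ is $\zeta$ plus a fixed auxiliary divisor, and that auxiliary divisor must itself be anchored in $\{q_1, \cdots, q_m\}$ (Riemann--Roch guarantees such kernels exist). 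Secondarily, in stage one the phrase ``absorbed by solving $\bar\partial u = \phi$ on $S$ with uniform sup-norm control'' glosses the analytic core: on a compact surface $\bar\partial$ has a nontrivial cokernel when $g \geq 1$, so the localized data must first be corrected against antiholomorphic forms before solving, and the sup-norm estimate is precisely the Vitushkin-type bound that makes the theorem hard --- as written, this step appeals to the result rather than proving it.
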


\medskip

Let $(S, \HH_1), (S, \HH_2)$ be two exp-algebraic curves with the same underlying Riemann surface $S$
and the same set of punctures $p_1, \cdots, p_n$,
and suppose the hypothesis of Theorem 1.1 holds, namely the line bundles $L_{\HH_1}, L_{\HH_2}$
are isomorphic and the induced isomorphism $$W^*_{\HH_1}
\,\longrightarrow\, W^*_{\HH_2}$$ maps $K_{\HH_1}$ to $K_{\HH_2}$.
Since the spaces $\MM_{\HH_1}, \MM_{\HH_2}$ are isomorphic to the spaces of meromorphic sections of
$L_{\HH_1}$ and $L_{\HH_2}$ respectively, there is an induced isomorphism $\MM_{\HH_1}
\,\longrightarrow\, \MM_{\HH_2}$
which preserves divisors. We fix non-zero functions $f_i \,\in\, \MM_{\HH_i},\  i = 1,2$ which correspond
to each other under this isomorphism, and let $S^*_i = S' \cup \RR_i$ denote the completions
induced by the corresponding log-Riemann surface structures. We also fix a meromorphic
$1$-form $\alpha_0$ on $S$. Then the divisor preserving
isomorphisms $\MM_{\HH_1} \,\longrightarrow\, \MM_{\HH_2}$ and $\Omega_{\HH_1}
\,\longrightarrow\,\Omega_{\HH_2}$ induced by the
isomorphism $L_{\HH_1} \,\longrightarrow\, L_{\HH_2}$ can be expressed as
$$g \cdot f_1 \,\longmapsto\, g \cdot f_2
\ \  \text{ and } \ \
g f_1 \cdot \alpha_0 \longmapsto g f_2 \cdot \alpha_0$$ respectively, where $g$
varies over all meromorphic functions on $S$.

\medskip

\begin{lemma} \label{exact} For any meromorphic function $g_2$ on $S$ such that $g_2 f_2 \in \OO_{\HH_2}$,
the $1$-form $g_2 f_1\left(\frac{df_1}{f_1} - \frac{df_2}{f_2}\right)$ lies in the space $d\OO_{\HH_1}$.
\end{lemma}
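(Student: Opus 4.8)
The plan is to combine the explicit line bundle/divisor dictionary, the nondegeneracy of the integration pairing (Theorem~\ref{pairing}), and the hypothesis $K_{\HH_1}\mapsto K_{\HH_2}$, linked by one algebraic identity. First I record the reductions. Since $f_1$ and $f_2$ correspond under the divisor-preserving isomorphism, they have the same divisor, so $r:=f_1/f_2$ is holomorphic and nonvanishing on $S'$ and $e^{h_i^{(1)}-h_i^{(2)}}$-like at each $p_i$; consequently $\eta:=\frac{df_1}{f_1}-\frac{df_2}{f_2}=d\log r$ is holomorphic on $S'$ and meromorphic at the punctures. Writing $\phi_1:=g_2f_1$ and $\phi_2:=g_2f_2$, the hypothesis $g_2f_2\in\OO_{\HH_2}$ together with the divisor-preserving isomorphism gives $\phi_1\in\OO_{\HH_1}$ and $\phi_2\in\OO_{\HH_2}$, hence $\omega:=g_2f_1\,\eta\in\Omega^0_{\HH_1}$ and $d\phi_2\in\Omega^0_{\HH_2}$. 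A direct computation then yields the key identity
$$g_2f_1\!\left(\tfrac{df_1}{f_1}-\tfrac{df_2}{f_2}\right)=d\phi_1-r\,d\phi_2,$$
obtained by expanding $d\phi_1=g_2\,df_1+f_1\,dg_2$ and $r\,d\phi_2=\tfrac{f_1g_2}{f_2}\,df_2+f_1\,dg_2$ and subtracting.

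Next I pass to periods. Because $\omega\in\Omega^0_{\HH_1}$, Theorem~\ref{pairing} (applied to $S^*_1$) shows that $\omega\in d\OO_{\HH_1}$ if and only if $\int_{\gamma_1}\omega=0$ for every $\gamma_1\in H_1(S^*_1,\RR;\C)$. From the identity, $\int_{\gamma_1}\omega=\int_{\gamma_1}d\phi_1-\int_{\gamma_1}r\,d\phi_2$. The first term vanishes: $\phi_1\in\OO_{\HH_1}$ is Stolz continuous and takes the value $0$ at the points of $\RR_1$, so the integral of $d\phi_1$ over any relative cycle (or loop) in $S^*_1$ is zero. It therefore remains to show $\int_{\gamma_1}r\,d\phi_2=0$.

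The crucial observation is that multiplication by $r=f_1/f_2$ sends $gf_2\alpha_0\mapsto gf_1\alpha_0$, i.e.\ it is exactly the inverse of the form-isomorphism $\Omega^0_{\HH_1}\to\Omega^0_{\HH_2}$ induced by $L_{\HH_1}\cong L_{\HH_2}$. Thus $r\,d\phi_2$ is the image of $d\phi_2\in\Omega^0_{\HH_2}$ under this inverse, and the dual map $W^*_{\HH_1}\to W^*_{\HH_2}$ acts on periods by $\gamma_1\mapsto(\omega_2\mapsto\int_{\gamma_1}r\,\omega_2)$. The hypothesis that this map carries $K_{\HH_1}$ into $K_{\HH_2}$ means precisely that for each $\gamma_1\in H_1(S^*_1,\RR;\C)$ there exists $\gamma_2\in H_1(S^*_2,\RR;\C)$ with $\int_{\gamma_1}r\,\omega_2=\int_{\gamma_2}\omega_2$ for all $\omega_2\in\Omega^0_{\HH_2}$. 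Taking $\omega_2=d\phi_2$ gives $\int_{\gamma_1}r\,d\phi_2=\int_{\gamma_2}d\phi_2=0$, the last equality again by Stolz continuity of $\phi_2\in\OO_{\HH_2}$ and its vanishing on $\RR_2$. Hence $\int_{\gamma_1}\omega=0$ for every $\gamma_1$, and nondegeneracy of the pairing forces $\omega\in d\OO_{\HH_1}$.

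The routine part is the algebraic identity and the convergence/Stolz-continuity bookkeeping needed to justify that all the integrals make sense and that exact differentials of functions in $\OO_{\HH_i}$ have vanishing periods. I expect the main obstacle to be the bookkeeping of \emph{directions} in the duality: one must check that multiplication by $r$ really implements the inverse of the induced isomorphism on $1$-forms, and then apply the transport-of-cycles statement coming from $K_{\HH_1}\mapsto K_{\HH_2}$ in the correct direction (each $\gamma_1$ producing a matching $\gamma_2$ equating the two periods). Once that correspondence is pinned down, feeding the \emph{exact} form $d\phi_2$ through it is what collapses the remaining period to zero.
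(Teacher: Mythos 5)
Your proof is correct and follows essentially the same route as the paper: the paper likewise unpacks the hypothesis on $K_{\HH_1}\mapsto K_{\HH_2}$ into a transport of cycles (for each $\gamma_1$ a $\gamma_2$ with $\int_{\gamma_1}gf_1\alpha_0=\int_{\gamma_2}gf_2\alpha_0$), observes $\int_{\gamma_2}d(g_2f_2)=0$ by exactness, and concludes via the nondegeneracy of Theorem~\ref{pairing}. The only difference is cosmetic ordering: the paper applies nondegeneracy midway to produce a primitive $g_1f_1$ of $gf_1\alpha_0=r\,d(g_2f_2)$ and then obtains the lemma from the identity $g_2f_1\left(\frac{df_2}{f_2}-\frac{df_1}{f_1}\right)=d(g_1f_1)-d(g_2f_1)$, whereas you write the target form directly as $d(g_2f_1)-r\,d(g_2f_2)$, check that all its periods vanish, and apply nondegeneracy once at the end.
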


\medskip

\begin{proof}
The hypothesis of Theorem \ref{mainthm} implies that for any $\gamma_1 \in H_1(S^*_1, \RR_1 ; \C)$,
there is a $$\gamma_2 \,\in\, H_1(S^*_2, \RR_2 ; \C)$$ such that $\int_{\gamma_1} gf_1 \alpha_0 = \int_{\gamma_2} gf_2 \alpha_0$
for all meromorphic functions $g$ on $S$ such that $g f_i \alpha_0$ is holomorphic on $S'$ for $i = 1,2$.
If $g_2$ is a meromorphic function on $S$ such that $g_2 f_2$ is holomorphic on $S'$, then
$$d(g_2 f_2) \,=\, g f_2 \alpha_0$$
for some meromorphic function $g$ on $S$ such that $g f_2 \alpha_0$ is holomorphic on $S'$. Since the isomorphism
$\Omega(\HH_1) \,\longrightarrow\, \Omega(\HH_2)$ is divisor preserving, we have that $g f_1 \alpha_0$ is also holomorphic on $S'$, so
for any $\gamma_1 \,\in\, H_1(S^*_1, \RR_1 ; \C)$ there is a $\gamma_2 \in H_1(S^*_2, \RR_2 ; \C)$ such that
$$
\int_{\gamma_1} gf_1 \alpha_0 = \int_{\gamma_2} gf_2 \alpha_0 = \int_{\gamma_2} d(g_2 f_2) = 0
$$
Since this is true for all $\gamma_1 \in H_1(S^*_1, \RR_1 ; \C)$, it follows from Theorem \ref{pairing}
that $gf_1 \alpha_0 \in d\OO_{\HH_1}$, so there exists a meromorphic function $g_1$ on $S$ such that $g_1 f_1$
is holomorphic on $S'$ and $g f_1 \alpha_0 = d(g_1 f_1)$.

\medskip

It follows from the equalities $g f_i \alpha_0 = d(g_i f_i), i = 1,2$ that
$$
dg_1 + g_1 \frac{df_1}{f_1} = g \alpha_0 = dg_2 + g_2 \frac{df_2}{f_2},
$$
hence
$$
dg_1 + g_1 \frac{df_1}{f_1} = \left(dg_2 + g_2 \frac{df_1}{f_1}\right) + g_2 \left(\frac{df_2}{f_2} - \frac{df_1}{f_1}\right),
$$
so multiplying above by $f_1$ gives
$$
g_2 f_1 \left(\frac{df_2}{f_2} - \frac{df_1}{f_1}\right) = d(g_1 f_1) - d(g_2 f_1) \in d\OO_{\HH_1}
$$
as required, since $g_1 f_1, g_2 f_1 \in \OO_{\HH_1}$ (note that $g_2 f_2 \in \OO_{\HH_2}$ implies $g_2 f_1 \in \OO_{\HH_1}$).
\end{proof}

\medskip

\begin{proof}[Proof of Theorem \ref{mainthm}] We consider different cases:

\medskip

\noindent Case 1. The genus of $S$ is at least one:

\medskip

In this case there exists a closed curve $\gamma$ disjoint from the punctures $p_1, \cdots, p_n$ and the poles and
zeroes of $f_1, f_2$ such that $S - \gamma$ is connected. Fix a non-zero meromorphic function $g_0$ on $S$ such that
$g_0 f_1$ (and hence also $g_0 f_2$) is holomorphic on $S'$.

\medskip

If the meromorphic $1$-form $\omega = \frac{df_1}{f_1} - \frac{df_2}{f_2}$ (which is holomorphic outside
the punctures and the zeroes and poles of $f_1, f_2$)
is not identically zero, then we can choose a continuous function $u$ on $\gamma$ such that $\int_{\gamma} u g_0 f_1 \omega \neq 0$
(since the $1$-form $g_0 f_1 \omega$ is holomorphic and not identically zero on $\gamma$). By Theorem \ref{approxn}, since
$S - \gamma$ is connected and contains $p_1$, we can choose a meromorphic function $v$ on $S$ which is holomorphic
on $S - \{p_1\}$ and uniformly close enough to $u$ on $\gamma$ such that $\int_{\gamma} v g_0 f_1 \omega \neq 0$.
Letting $g_2 = v g_0$, we have that $g_2 f_2$ is holomorphic on $S'$ and $\int_{\gamma} g_2 f_1 \omega \neq 0$,
contradicting Lemma \ref{exact}. It follows that $df_1/f_1 \equiv df_2/f_2$, from which it follows easily
that $\HH_1 = \HH_2$.

\medskip

\noindent Case 2. The genus of $S$ is zero and the number $n$ of punctures is at least two:

\medskip

In this case $S = \widehat{\C}$ and we may assume $p_1 = 0, p_2 = \infty$. Fix a non-zero polynomial $P$ such that
$Pf_1, Pf_2$ are holomorphic on $S'$. Then by Lemma \ref{exact}, for all $k \in \mathbb{Z}$, taking
$g_2 = z^k P(z)$ we have
$$
Res((z^k P)f_1 \omega, 0) = 0
$$
from which it follows that the Laurent series of $P f_1 \omega$ around $z = 0$ vanishes identically, hence
$\omega \equiv 0$ and $\HH_1 = \HH_2$.

\medskip

\noindent Case 3. The genus of $S$ is zero and there is only one puncture:

\medskip

In this case $S = \widehat{\C}$ and we may assume the single puncture $p_1 = \infty$, and that the
functions $f_1, f_2$ are of the form $f_i = e^{P_i}$ for some polynomials $P_1, P_2$. In this
case it follows from the main theorem of \cite{kb1} that
the dimension of $H_1(S^*_i, \RR_i ; \C)$ equals $deg(P_i) - 1$. Since $K_{\HH_1}$ and
$K_{\HH_2}$ are isomorphic by hypothesis, it follows that $deg(P_1) = deg(P_2) = d$ say, where
$d \geq 2$ since $H_1(S^*_1, \RR_1 ; \C)$ is non-trivial.

\medskip

Let $P_1(z) = a_d z^d + \ldots + a_0, P_2(z) = b_d z^d + \ldots + b_0$. Let $\gamma_1, \cdots, \gamma_{d-1}$
be a basis for $H_1(S^*_1, \RR_1 ; \C)$ as described in section 4 of \cite{kb1}, each $\gamma_k$
being a curve joining a pair of ramification points $w^*_0, w^*_k$, where $\RR_1 = \{ w^*_0, \cdots, w^*_{d-1} \}$.
By hypothesis, for each curve $\gamma_k \in H_1(S^*_1, \RR_1 ; \C)$ there is a $\gamma'_k \in H_1(S^*_2, \RR_2 ; \C)$
such that
$$
\int_{\gamma_k} Q(z) e^{P_1(z)} dz = \int_{\gamma'_k} Q(z) e^{P_2(z)} dz
$$
for all polynomials $Q$. Consider the $(d-1) \times d$ matrix
\begin{align*}
M & = \left (
\begin{matrix} \int_{\gamma_1} e^{P_1(z)} dz & \cdots &\int_{\gamma_1}
z^{d-2} e^{P_1(z)} dz & \int_{\gamma_1} z^{d-1} e^{P_1(z)} dz \\ \vdots &\vdots &\ddots & \vdots \\
\int_{\gamma_{d-1}} e^{P_1(z)} dz & \cdots &\int_{\gamma_{d-1}}
z^{d-2}e^{P_1(z)} dz & \int_{\gamma_{d-1}} z^{d-1} e^{P_1(z)} dz \\ \end{matrix} \right ) \\
  & = \left (
\begin{matrix} \int_{\gamma'_1} e^{P_2(z)} dz & \cdots &\int_{\gamma'_1}
z^{d-2} e^{P_2(z)} dz & \int_{\gamma'_1} z^{d-1} e^{P_2(z)} dz \\ \vdots &\vdots &\ddots & \vdots \\
\int_{\gamma'_{d-1}} e^{P_2(z)} dz & \cdots &\int_{\gamma'_{d-1}}
z^{d-2}e^{P_2(z)} dz & \int_{\gamma'_{d-1}} z^{d-1} e^{P_2(z)} dz \\ \end{matrix} \right ) \\
\end{align*}
It follows from Theorem III.1.5.1 of \cite{bpm4} that the $(d-1)$ $1$-forms
$$z^k e^{P_1} dz,\ \ k \,=\, 0, \cdots, (d-2)$$ span $H^1_{dR, 0}(S^*_1)$, and hence form a basis for $H^1_{dR, 0}(S^*_1)$.
Since by Theorem \ref{pairing} the pairing with $H_1(S^*_1, \RR_1 ; \C)$ is nondegenerate, it follows that
the $(d-1) \times (d-1)$ submatrix
formed by the first $(d-1)$ columns of $M$ is nonsingular, thus $M$ has rank $(d-1)$. On the other
hand, since $d(e^{P_i}) = P'_i e^{P_i} dz, i= 1,2$, it follows that
$$
M \cdot \left(\begin{matrix} a_1 \\ \vdots \\ (d-1)a_{d-1} \\ da_d \end{matrix} \right) =
M \cdot \left(\begin{matrix} b_1 \\ \vdots \\ (d-1)b_{d-1} \\ db_d \end{matrix} \right) = 0
$$
hence there is a scalar $\lambda$ such that $kb_k = \lambda ka_k, k = 1, \cdots, d-1$, so $P'_2 = \lambda P'_1$.
It follows from Lemma \ref{exact} that for any polynomial $Q$ the $1$-form
$$Q e^{P_1}(\lambda-1)P'_1 dz \,=\, Q e^{P_1} (P'_2 - P'_1) dz$$ lies
in $d\OO_{\HH_1}$. Thus if $\lambda \neq 1$, then $Q P'_1 e^{P_1} dz \in d\OO_{\HH_1}$, hence
$$
Q' e^{P_1} dz = d(Qe^{P_1}) - Q P'_1 e^{P_1}dz \in d\OO_{\HH_1}
$$
for all polynomials $Q$. Since all $1$-forms in $\Omega^0_{\HH_1}$ are of the form $P e^{P_1} dz$
for some polynomial $P$ and any $P = Q'$ for some polynomial $Q$, it follows that
$H^1_{dR,0}(S^*_1)$ is trivial,
a contradiction. Thus $\lambda = 1$, so $P'_2 = P'_1$ and hence $\HH_1 = \HH_2$.
\end{proof}

\end{document}